\theoremstyle{plain}
\newtheorem{thm}{Theorem}[section]
\newtheorem{cor}[thm]{Corollary}
\newtheorem{lem}[thm]{Lemma}
\newtheorem{rem}[thm]{Remark}
\def\sqr#1#2{{\vcenter{\vbox{\hrule height.#2pt
              \hbox{\vrule width.#2pt height#1pt \kern#1pt \vrule
width.#2pt}
              \hrule height.#2pt}}}}
\def\be{\begin{equation}}
\def\ee{\end{equation}}
\def\ga{{\gamma}}
\def\ep{{\epsilon}}
\def\lb{\label}
\def\ga{{\gamma}}
\def\d{{d\over dt}}
\def\sd{{d^2\over dt^2}}
\def\no{\noindent}
\def\bs{\bigskip}
\def\dim{\hbox{\rm dim$\,$}}
\def\({\Big (}
\def\){\Big )}
\def\[{\Big[}
\def\]{\Big]}
\def\be{\begin{equation}}
\def\bel{\begin{equation}\label}
\def\ee{\end{equation}}
\def\bea{\begin{eqnarray}}
\def\eea{\end{eqnarray}}
\def\bt{\begin{theorem}}
\def\et{\end{theorem}}
\def\bc{\begin{corollary}}
\def\ec{\end{corollary}}
\def\bl{\begin{lemma}}
\def\el{\end{lemma}}
\def\bp{\begin{proposition}}
\def\ep{\end{proposition}}
\def\br{\begin{remark}}
\def\er{\end{remark}}
\def\ba{\begin{array}}
\def\ea{\end{array}}
\def\bd{\begin{definition}}
\def\ed{\end{definition}}
\begin{document}

\title{\bf Eigenvalue problem of Sturm-Liouville systems with separated boundary conditions }
\author{Xijun Hu\thanks{Partially supported
by NSFC(No.11425105, 11131004), NCET,  E-mail:xjhu@sdu.edu.cn }
 \quad Penghui Wang\thanks{ Partially supported by NSFC(No.11471189),
 E-mail: phwang@sdu.edu.cn }    \\ \\
 Department of Mathematics, Shandong University\\
Jinan, Shandong 250100, The People's Republic of China\\
}
\date{}
\maketitle
\begin{abstract}

Let $\lambda_j$ be  the $j$-th eigenvalue  of Sturm-Liouville systems with
separated boundary conditions, we build up the Hill-type formula, which represent  $\prod\limits_{j}(1-\lambda_j^{-1})$ as a determinant of finite matrix. This is  the first attack on such a formula under non-periodic type   boundary conditions. Consequently, we get the Krein-type
trace formula based on the Hill-type formula, which  express $\sum\limits_{j}{1\over
\lambda_j^m}$  as trace of finite  matrices.   The trace formula can be used to  estimate the conjugate point alone a geodesic in Riemannian  manifold and to get some infinite sum identities.
\end{abstract}

\bs

\no{\bf AMS Subject Classification:} 34B24, 34L15,  47E05

\bs

\no{\bf Key Words}. Hill-type formula, trace formula, Hamiltonian
systems, Sturm-Liouville systems

\section{Introduction}

In this paper, we will consider the eigenvalue problem for the Sturm-Liouville systems
 \begin{eqnarray}
-(P\dot{y}+Qy)^\cdot+Q^T\dot{y}+(R+\lambda R_1)y=0, \label{sl1} \end{eqnarray}
where  $Q$ is a continuous path of $n\times n$ matrices, and $P, R, R_1$ are continuous paths of $n\times n$ symmetric matrices on $[0,T]$. Instead of Legendre convexity condition, we assume that for any $t\in [0,T]$,  $P(t)$  is  invertible.

  The
eigenvalue problem of the Sturm-Liouville systems depends on the
boundary conditions. There are two important type boundary
conditions, periodic type and separated type. For the literature in $n$-body problem, readers
can  refer to \cite{FT},\cite{HS}. The eigenvalue problem for $S$-periodic  boundary value problem, that is,
$y(0)=Sy(T)$ for some orthogonal matrix $S$, was studied in
\cite{HOW}.
 In
the present paper, we will consider the  separated boundary
conditions, which includes the homogenous  Dirichlet, Neumann and Robin
boundary conditions. More precisely,
 let $\Lambda_0, \Lambda_1$ be two Lagrangian subspaces of $(\mathbb R^{2n}, \omega_0) $  which are  phase spaces with standard symplectic structure.  Set  $x=P\dot{y}+Qy$, $z=(x,y)^T$, and the separated boundary condition is given by
\bea z(0)\in\Lambda_0,\quad  z(T)\in\Lambda_1. \label{slb1} \eea

In order to understand the eigenvalue problem of the system
(\ref{sl1}-\ref{slb1}),  for the eigenvalues $\lambda_j$ we will build up a  formula with the form
$\prod_{j}(1-\lambda_j^{-1})=\det(\mathcal{M}) $, where the matrix
$\mathcal{M}$ depends mainly on the monodromy matrix and the Lagrangian subspaces $\Lambda_0,\Lambda_1$. We called it Hill-type formula because a similar formula for periodic orbits was shown by Hill when he considered the motion of lunar perigee \cite{Hi} at 1877. However,
Hill did not prove the convergence of the infinite determinant, and
the convergence was given by Poincar\'e \cite{Po}. Afterwards, the
Hill-type formula for a periodic solution of Lagrangian system on
manifold was given  by Bolotin \cite{B}. For more results, please
refer (\cite{BT},\cite{De},\cite{HW1},\cite{Dav}) etc. We should point out that, till now,
all the known results on the Hill-type formula were given for the
periodic-type boundary problem.

To state Hill-type formula for the separated boundary conditions, we
firstly introduce some notations.   Suppose $\Lambda$ is a
Lagrangian subspace of $(\mathbb R^{2n}, \omega_0) $, a Lagrangian
frame for $\Lambda$ is a linear map $Z: \mathbb R^n \rightarrow
\mathbb R^{2n}$ whose image is $\Lambda$. It is easy to see that the
frame is of the form $Z=\left(\begin{array}{cc}X
\\ Y \end{array}\right)$, where $X,Y$ are $n\times n$ matrices and
satisfied $X^TY=Y^TX$.

By the standard Legendre transformation,  the linear  system
(\ref{sl1}) with the boundary conditions (\ref{slb1}) corresponds to the linear Hamiltonian system,
 \bea \dot{z}=JB_\lambda(t)z, \quad z(0)\in\Lambda_0,\quad  z(T)\in\Lambda_1, \label{h1}\eea
with \bea  B_\lambda(t)=\left(\begin{array}{cc}P^{-1}(t)& -P^{-1}Q(t) \\
-Q(t)^TP^{-1}(t)  & Q(t)^TP^{-1}(t)Q(t)-R(t)-\lambda R_1(t)
\end{array}\right).\label{b2} \eea
 Without confusion, for Lagrangian system, denote  $\gamma_\lambda(t)$  the
fundamental solution of (\ref{h1}), that is
$\dot{\gamma}_\lambda(t)=JB_\lambda(t)\gamma_\lambda(t)$ with
$\gamma_\lambda(0)=I_{2n}$.
Let $Z_0,Z_1$ be frames
of $\Lambda_0,\Lambda_1$. Obviously, $\gamma_\lambda(T) Z_0$ are
frames of $\ga_\lambda(T) \Lambda_0 $. $(\ga_\lambda(T) Z_0,Z_1)$
are   $2n\times 2n$ matrices.

To simplify the notation, set $A=-\d(P\d+Q)+Q^T\d+R$, which is a
self-adjoint operator on $L^2([0,T], \mathbb R^{n})$ with domain:
$$   D(\Lambda_0,\Lambda_1)=\{y\in W^{2,2}([0,T],  \mathbb R^{n}),   z(0)\in\Lambda_0,   z(T)\in\Lambda_1  \}.      $$
Throughout of the paper, without loss of generality,  we will assume
$A$ is  nondegenerate,  that is, $0$ is not an eigenvalue of
(\ref{sl1}-\ref{slb1}). It is obvious that $\lambda$ is a nonzero
eigenvalue of the system(\ref{sl1}-\ref{slb1}) if and only if
$-{1\over \lambda}$ is an eigenvalue of $R_1A^{-1}$. In what follows, the multiplicity of an eigenvalue $\lambda_j$ means the algebraic multiplicity of $R_1A^{-1}$ at ${-1/\lambda_j}$.

\begin{thm}\label{thm1.1}
Under the nondegenerate  assumption, we have
\begin{eqnarray}\label{t.1a}
\prod_{j}(1-\lambda_j^{-1})=\det (\ga_{1}(T) Z_0,Z_1) \cdot
\det(\ga_0(T) Z_0,Z_1)^{-1},     \end{eqnarray} where the left
infinite product takes on the eigenvalues $\lambda_j$  counting the multiplicity.
\end{thm}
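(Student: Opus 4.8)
The plan is to read the right-hand side as a ratio of characteristic functions and to identify the left-hand side with a Fredholm determinant. Write $D(\lambda)=\det(\ga_\lambda(T)Z_0,Z_1)$ for the $2n\times 2n$ determinant on the right. A solution of (\ref{h1}) with $z(0)\in\Lambda_0$ is $z(t)=\ga_\lambda(t)Z_0v$, and the condition $z(T)\in\Lambda_1$ says exactly that $\ga_\lambda(T)Z_0v=Z_1w$ for some $w$; hence $\lambda$ is an eigenvalue of (\ref{sl1}-\ref{slb1}) if and only if the vector $(v,-w)$ lies in the kernel of $(\ga_\lambda(T)Z_0,Z_1)$, i.e. if and only if $D(\lambda)=0$ (a kernel vector with $v=0$ forces $w=0$, so a nontrivial kernel always produces an honest solution). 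Since $B_\lambda$ is affine in $\lambda$, the fundamental solution $\ga_\lambda(T)$ is entire in $\lambda$, so $D$ is entire, and the nondegeneracy assumption gives $D(0)\ne0$. Thus the right-hand side of (\ref{t.1a}) is $D(1)/D(0)$, an entire quantity normalized to $1$ at $\lambda=0$.

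On the operator side, set $A_\lambda=A+\lambda R_1$ and $K=R_1A^{-1}$. Because $A$ is a nondegenerate self-adjoint Sturm-Liouville operator whose eigenvalues grow quadratically in modulus, $A^{-1}$ is trace class, hence so is $K$, and the Fredholm determinant $E(\lambda):=\det(I+\lambda K)$ is well defined and entire. By Lidskii's theorem $E(\lambda)=\prod_j(1+\lambda\mu_j)$, the (absolutely convergent) product running over the eigenvalues $\mu_j=-1/\lambda_j$ of $K$ with their algebraic multiplicities; in particular $E(0)=1$ and $E(1)=\prod_j(1-\lambda_j^{-1})$, which is precisely the left-hand side of (\ref{t.1a}). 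The theorem therefore reduces to the identity $D(\lambda)=D(0)\,E(\lambda)$ for all $\lambda$, specialized to $\lambda=1$.

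I would establish $D(\lambda)=D(0)E(\lambda)$ by comparing logarithmic derivatives. From $I+\lambda K=A_\lambda A^{-1}$ one gets $\partial_\lambda\log E(\lambda)=\tr\big((I+\lambda K)^{-1}K\big)=\tr(R_1A_\lambda^{-1})$, and since $A_\lambda^{-1}$ is the integral operator with Green kernel $G_\lambda(t,s)$ built from $\ga_\lambda$ and the frames, this equals $\int_0^T\tr\big(R_1(t)G_\lambda(t,t)\big)\,dt$. On the other side Jacobi's formula gives $\partial_\lambda\log D(\lambda)=\tr\big((\ga_\lambda(T)Z_0,Z_1)^{-1}(\partial_\lambda\ga_\lambda(T)Z_0,0)\big)$, and Duhamel's variation-of-parameters formula together with $\partial_\lambda B_\lambda=-\,\mathrm{diag}(0,R_1)$ writes $\partial_\lambda\ga_\lambda(T)=-\ga_\lambda(T)\int_0^T\ga_\lambda(s)^{-1}J\,\mathrm{diag}(0,R_1)\,\ga_\lambda(s)\,ds$. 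The crux — and the step I expect to be the main obstacle — is to show these two expressions for the logarithmic derivative coincide. This is a direct but delicate computation: it requires writing $G_\lambda(t,t)$ explicitly in terms of $\ga_\lambda,Z_0,Z_1$, and then using the symplectic frame relations $X^TY=Y^TX$ and $\ga_\lambda^TJ\ga_\lambda=J$ to reorganize the trace into the diagonal Green-function integral.

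Once the two logarithmic derivatives are shown to agree as meromorphic functions, the proof closes cleanly. Both have simple poles only at the points $\lambda_j$, so their equality forces equal residues; the residue of $\partial_\lambda\log D$ is the order of vanishing of $D$ at $\lambda_j$, while that of $\partial_\lambda\log E$ is the algebraic multiplicity of $K$ at $\mu_j$, so these multiplicities coincide automatically and $D/E$ is entire and nowhere vanishing. Its logarithmic derivative being identically zero, $D/E$ is constant, equal to $D(0)/E(0)=D(0)$. Evaluating $D(\lambda)=D(0)E(\lambda)$ at $\lambda=1$ gives (\ref{t.1a}).
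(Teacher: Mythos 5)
Your proposal takes a genuinely different route from the paper, and it stops exactly at the point where the real work begins. The paper never touches logarithmic derivatives or Green functions: it observes that both $E(\lambda)=\det(I+\lambda R_1A^{-1})$ and $g(\lambda)=\det(\ga_\lambda(T)Z_0,Z_1)$ are entire functions of controlled growth (Lemma \ref{lemma2.2} proves $\|\ga_\lambda(T)\|\leq C_0|\lambda|^{1/2}\exp(C|\lambda|^{1/2})$ via the rescaling $a(\mu)=diag(\mu I_n,\mu^{-1}I_n)$, $\mu=\lambda^{1/4}$), that they vanish at the same points (Lemma \ref{lemma2.1}), and that, after reducing to the case $R_1>0$ through the factorization (\ref{hillp1}), the zeros have the same multiplicities (Lemma \ref{lemma2.3}, which needs a small-perturbation argument). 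Simon's factorization lemma (Lemma \ref{lem2.0}) then forces $E(\lambda)=g(\lambda)/g(0)$. Your route would, if completed, bypass the growth estimate, the multiplicity lemma, and the reduction to positive $R_1$ altogether, which is an attractive trade.

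But it is not completed. The identity $\partial_\lambda\log D(\lambda)=\tr(R_1A_\lambda^{-1})=\int_0^T\tr\big(R_1(t)G_\lambda(t,t)\big)\,dt$, which you yourself flag as ``the main obstacle,'' carries the entire content of the theorem: integrating it in $\lambda$ is literally the statement $D(\lambda)=D(0)E(\lambda)$. Declaring that it follows from ``a direct but delicate computation'' with the Green kernel and the symplectic relations restates the goal rather than proving it; one must actually write $G_\lambda(t,s)$ in terms of $\ga_\lambda$, $Z_0$, $Z_1$ and the inverse of $(\ga_\lambda(T)Z_0,Z_1)$, insert the Duhamel expression for $\partial_\lambda\ga_\lambda(T)$ into Jacobi's formula, and reorganize the trace. (The identity is true and such computations exist in the Gel'fand--Yaglom/Forman circle of ideas, but the step has to be carried out.) A secondary point: your final paragraph claims the residues of the two logarithmic derivatives match ``automatically,'' so the multiplicities coincide; that equality is only available \emph{after} the crux identity is established, so it cannot be cited as independent evidence that $D$ and $E$ vanish to the same order --- this is precisely the issue the paper's Lemma \ref{lemma2.3} is there to settle. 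What you do establish correctly --- that $D$ and $E$ are entire, share their zero sets, that $E(1)$ equals the left-hand side, and that the theorem reduces to $D=D(0)E$ --- is the easy half of the argument.
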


\begin{rem}
The Hill-type formula for $S$-periodic orbits was built up in
\cite{HOW} with the following form \bea
\prod_{j}(1-\lambda_j^{-1})=\det (\ga_{1}(T)-S_d) \cdot
\det(\ga_0(T)-S_d),\lb{s.1} \eea where $S_d=diag(S,S)$. Although
 (\ref{t.1a}) is similar to (\ref{s.1}), the proof is different.
(\ref{s.1}) is derived from the Hill-type formula for $S$-periodic
orbits of Hamiltonian systems \cite{HW}. The proof of  (\ref{t.1a})
is  direct, and could cover the case of (\ref{s.1}). To the best of our knowledge,
(\ref{t.1a})  is the first study on the Hill-type formula of
non-periodic type boundary problem. The corresponding formula
 in Hamiltonian systems for the orbits
with Lagrangian boundary conditions is still open.
\end{rem}

 Trace formula is a powerful tool in the study of eigenvalue problem, especially in estimating the first eigenvalue.
  The first work on the trace formula was established by Krein\cite{K1,K2} for the $-1$-periodic orbits in the simple case.
   For the  system with $S$-periodic boundary condition, the trace formula was established in \cite{HOW,HW1}.
The present paper is  a continuous work of \cite{HOW,HW1}, and we will build up the trace formula for separated
boundary value problem of Sturm-Liouville system. The idea to get the trace formula is similar to
that in \cite{HOW}. That is, using $\lambda R_1$ instead of
$R_1$, and give Taylor expansion on both sides of the Hill-type
formula. With the notations defined in Section \ref{sec3}, we have
the following theorem.

\begin{thm}\label{thm1.2} Assume $A$ is non-degenerate, $\lambda_j$ are eigenvalues of the Sturm-Liouville system (\ref{sl1}-\ref{slb1}) counting the multiplicity,
 we have
for any positive integer $m$,
\begin{eqnarray}\label{0.0.0}
\sum\limits_{j}{1\over \lambda_j^m}=m\sum_{k=1}^m
\frac{(-1)^{k}}{k}\[\sum\limits_{j_1+\cdots+j_k=m}Tr(G_{j_1}\cdots
G_{j_k})\],
\end{eqnarray}
where $G_k, k\in\mathbb{N}$, defined in (\ref{G}), are $n\times n$
matrices.
\end{thm}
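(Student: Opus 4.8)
The plan is to apply Theorem \ref{thm1.1} not to the given system but to the rescaled family obtained by replacing $R_1$ with $sR_1$, and then to recover the power sums $\sum_j\lambda_j^{-m}$ by matching the Taylor expansions at $s=0$ of the two sides of the resulting Hill-type identity. Concretely, in $B_\mu(t)$ of (\ref{b2}) replace $R_1$ by $sR_1$, so that the spectral parameter $\mu$ now enters through $-\mu sR_1$; if $\gamma_\mu^{(s)}(t)$ is the fundamental solution of this rescaled system, its eigenvalues are exactly $\mu_j=\lambda_j/s$, and Theorem \ref{thm1.1} reads
\begin{equation}
\prod_j\Bigl(1-\frac{s}{\lambda_j}\Bigr)=\det\bigl(\gamma_1^{(s)}(T)Z_0,Z_1\bigr)\cdot\det\bigl(\gamma_0^{(s)}(T)Z_0,Z_1\bigr)^{-1}.
\end{equation}
The decisive simplification is that at $\mu=0$ the term $-\mu sR_1$ vanishes, so $\gamma_0^{(s)}(T)=\gamma_0(T)$ does not depend on $s$. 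Writing $M(s)=(\gamma_1^{(s)}(T)Z_0,Z_1)$, whose last $n$ columns are the constant frame $Z_1$, the identity becomes $\prod_j(1-s/\lambda_j)=\det M(s)/\det M(0)$.

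I would then take logarithms and use $\log\det=Tr\log$. Because $R_1A^{-1}$ is compact with summable eigenvalues $-1/\lambda_j$, the $|\lambda_j|$ grow fast enough that $\prod_j(1-s/\lambda_j)$ converges to an entire function of $s$ whose logarithm is analytic near $s=0$, where
\begin{equation}
\sum_j\log\Bigl(1-\frac{s}{\lambda_j}\Bigr)=-\sum_{m\ge1}\frac{s^m}{m}\sum_j\frac{1}{\lambda_j^m}.
\end{equation}
On the right, $\gamma_1^{(s)}(T)$ is entire in $s$ through the time-ordered (Peano--Baker) series generated by the perturbation $-s\,\mathrm{diag}(0,R_1)$ of $B_0$, so $M(s)$ is entire; writing $M(0)^{-1}M(s)=I+\sum_{k\ge1}s^kG_k$ defines the $n\times n$ matrices $G_k$ of (\ref{G}), and near $s=0$
\begin{equation}
\log\frac{\det M(s)}{\det M(0)}=Tr\log\Bigl(I+\sum_{k\ge1}s^kG_k\Bigr)=\sum_{p\ge1}\frac{(-1)^{p-1}}{p}Tr\Bigl(\bigl(\textstyle\sum_{k\ge1}s^kG_k\bigr)^p\Bigr).
\end{equation}

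Finally I would extract the coefficient of $s^m$ on both sides. Expanding the $p$-th power gives $\sum_{j_1+\cdots+j_p=m}Tr(G_{j_1}\cdots G_{j_p})$, and since each index is at least $1$ only $p\le m$ contributes; equating the $s^m$-coefficient with $-\tfrac1m\sum_j\lambda_j^{-m}$ and multiplying by $-m$ produces exactly (\ref{0.0.0}) with $k=p$. The multinomial bookkeeping and the explicit verification that the $G_k$ of (\ref{G}) coincide with these Taylor coefficients are routine. The step demanding genuine care is the analytic justification for matching coefficients: one must confirm that $\sum_j|\lambda_j|^{-m}$ converges, that the infinite product is entire, and that the two logarithms agree as convergent power series on a common disc about $s=0$---which in particular requires $\det M(0)\ne0$, and this is precisely the standing nondegeneracy hypothesis on $A$ (equivalently $\gamma_0(T)\Lambda_0\cap\Lambda_1=\{0\}$).
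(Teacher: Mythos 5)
Your proposal is correct and takes essentially the same route as the paper: replacing $R_1$ by $sR_1$ is literally the paper's identity $\det(I+\lambda R_1A^{-1})=g(0)^{-1}g(\lambda)$ with the spectral parameter as expansion variable, and the remaining steps (Peano--Baker expansion of $\gamma_s(T)$, reduction of the $2n\times 2n$ determinant to an $n\times n$ one, $\log\det=\mathrm{Tr}\log$, coefficient matching against $-\sum_m \frac{s^m}{m}\sum_j\lambda_j^{-m}$) are exactly the paper's Section 3. The only cosmetic difference is that you reduce to size $n$ via the block structure of $M(0)^{-1}M(s)$ rather than the paper's symplectic-orthogonal conjugation and projection $\mathcal{P}$, so your Taylor coefficients need not literally coincide with the $G_k$ of (\ref{G}); but both families yield the same $\det(I_n+\sum_k\lambda^kG_k)$, hence the same extracted coefficients and the same formula (\ref{0.0.0}).
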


For applications, a main observation is that the trace formula can be used to estimate the non-degeneracy of the system.
  Moreover, we can estimate the relative Morse index and Maslov-type
   index. The Maslov-type index is a powerful tool in study the
   stability problem, please refer \cite{Lon4},\cite{Lon2},\cite{HS}
   for the detail. By using the trace formula and Maslov-type index theory, in \cite{HOW}
    we studied the stability region and hyperbolic region of elliptic Lagrangian orbits in planar three body problem.

 It is not hard to see that all the results on the applications of trace formula in \cite{HOW} have twins in the case of separated  boundary conditions.
  We will not list all the
theorems here, but give some computations for the trace formula some special  case.
Let $R$ be a  continuous path of $n\times n$ symmetric matrices on $[0,T]$, we
consider the system \bea \ddot{y}+\lambda Ry =0 \lb{4.0}. \eea Set
$ R^+=\frac{1}{2}(R+|R|)$,  which is a path of nonnegative symmetric
matrices, we have the following corollary.  \begin{cor}\lb{cr1.3}   Suppose $Tr\Big(\int_0^T\(t-\frac{t^2}{T}\)R^+dt\Big)<1$, the Dirichlet  problem for (\ref{4.0}) has no nontrivial solution.

\end{cor}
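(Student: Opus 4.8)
The plan is to show that the hypothesis forces $\lambda=1$ to lie strictly below the smallest positive eigenvalue of the system, so that in particular $1$ is not an eigenvalue; equivalently, the Dirichlet problem $\ddot y + Ry = 0$, $y(0)=y(T)=0$, admits only $y\equiv 0$. The natural device is to compare (\ref{4.0}) with the auxiliary system $\ddot y + \lambda R^+ y = 0$, for which the relation $R^+\geq 0$ makes every finite eigenvalue positive, and then to feed the $m=1$ trace formula into a spectral comparison.

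First I would set $A_0 = -\frac{d^2}{dt^2}$ on $L^2([0,T],\mathbb{R}^n)$ with domain $\{y:\ y(0)=y(T)=0\}$; this is the operator $A$ of the paper in the special case $P=I$, $Q=0$, $R=0$, and it is positive definite and nondegenerate. Its eigenvalues $(k\pi/T)^2$ are summable, so $A_0^{-1}$ is trace class, and the Dirichlet Green's function has explicit diagonal value $G(t,t)=t-t^2/T$. Applying Theorem \ref{thm1.2} with $m=1$ to the auxiliary problem (equivalently, evaluating $Tr(R^+A_0^{-1})$ through this Green's function) gives
\[
\sum_j \frac{1}{\lambda_j^+} \;=\; Tr\Big(\int_0^T \big(t-\tfrac{t^2}{T}\big) R^+\,dt\Big),
\]
where $\lambda_j^+$ are the eigenvalues of the auxiliary system.

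Next, since $R^+\geq 0$, every nontrivial eigenfunction of the auxiliary problem has $\int_0^T y^T R^+ y\,dt>0$, hence $\lambda_j^+>0$; equivalently the compact, self-adjoint, nonnegative operator $K:=A_0^{-1/2}R^+A_0^{-1/2}$ has nonzero spectrum $\{1/\lambda_j^+\}$, and $Tr(K)=\sum_j 1/\lambda_j^+$. The hypothesis $Tr\big(\int_0^T(t-t^2/T)R^+\,dt\big)<1$ thus reads $Tr(K)<1$, and since all summands are nonnegative the largest eigenvalue of $K$ is strictly below $1$. Therefore $\langle Ku,u\rangle<\langle u,u\rangle$ for every $u\neq 0$, and substituting $u=A_0^{1/2}y$ yields the key form inequality
\[
\int_0^T y^T R^+ y\,dt \;<\; \int_0^T |\dot y|^2\,dt \qquad \text{for all } y\neq 0,\ y(0)=y(T)=0.
\]

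Finally I would argue by contradiction. If $\ddot y + Ry = 0$ had a nontrivial Dirichlet solution $y$, then pairing with $y$ and integrating by parts (the boundary terms vanish) gives $\int_0^T |\dot y|^2\,dt = \int_0^T y^T R y\,dt$. Since $R\leq R^+$ as symmetric matrices, because $R^+-R=\tfrac12(|R|-R)\geq 0$, we have $\int_0^T y^T R y\,dt \leq \int_0^T y^T R^+ y\,dt$, and combining with the previous display produces $\int |\dot y|^2 \leq \int y^T R^+ y < \int |\dot y|^2$, a contradiction; hence no nontrivial solution exists. The step I expect to be the main obstacle is making the passage from the trace bound to the pointwise form inequality fully rigorous: one must justify that $R^+$ being merely nonnegative (and possibly degenerate) still gives $Tr(K)=\sum_j 1/\lambda_j^+$ with every $\lambda_j^+>0$, that $K$ is genuinely trace class, and that the trace computed from the diagonal of the Dirichlet Green's function is exactly the quantity delivered by $G_1$ in Theorem \ref{thm1.2}.
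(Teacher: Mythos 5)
Your proposal is correct and follows essentially the same route as the paper: establish $\sum_j 1/\lambda_j^+ = Tr\big(\int_0^T(t-t^2/T)R^+dt\big)$ for the auxiliary problem with $R^+$ (the paper gets this by computing $G_1$ explicitly, you equivalently via the diagonal of the Dirichlet Green's function), deduce from positivity of the $\lambda_j^+$ that the hypothesis forces $\lambda_1^+>1$, i.e.\ $\int_0^T y^TR^+y\,dt<\int_0^T|\dot y|^2\,dt$ for nonzero Dirichlet $y$, and conclude by the comparison $R\le R^+$. Your operator-theoretic justification via $K=A_0^{-1/2}R^+A_0^{-1/2}$ simply fills in the step the paper delegates to the discussion of \cite[Theorem 4.12]{HOW}.
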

This result can  be used  to estimate the conjugate point alone
a geodesics in Riemannian  manifold. For reader's convenience, we give details here.
Let  $c:[0,a]\to\mathcal {M} $ be a  geodesic of  Riemannian manifold $\mathcal {M}$.
Choose $\{e_1(0),...,e_n(0)\}$ to be an orthogonal normal basis of
$\dot{c}(0)^\perp\subset T_{c(0)}\mathcal{M}$. Its parallel
transport $\{e_1(t),...,e_n(t)\}(t\in[0,a])$ along $c$ gives an
orthogonal normal basis of $\dot{c}(t)^\perp\subset
T_{c(t)}\mathcal{M}$.  Recall that
the Jacobi equation is
$${D^2J\over dt}+R(\dot{c},J(t))\dot{c}=0. $$
The point $c(t_0)$ is said to be conjugate to $c(0)$ along $c$, $t_0\in[0,a]$, if there exists a  nontrivial Jacobi field $J$ along $c$, with $J(0)=J(t_0)=0$.
  Suppose
$J(t)=\sum_{i=1}^nJ_i(t)e_i(t)$ is the Jacobi field along $c$, then
Jacobi equation can be rewritten as  \bea
\ddot{J}_i(t)+\sum_{j=1}^nR_{ij}(t)J_j(t)=0,\,\,
i=1,...,n,\lb{2.1}\eea where $R_{ij}(t)=\langle
R(\dot{c}(t),e_j(t))\dot{c}(t),e_i(t)\rangle$. Let
$R(t)=(R_{ij}(t))$, which is a symmetric matrix, then $c(t_0)$ is conjugate point if and only if   the
second order system $\ddot{X}(t)+R(t)X(t) $ with Dirichlet boundary conditions has a nontrivial solution on $t\in[0,t_0]$.  It is obvious that
$ \hat{R}(t):=Tr(R(t))  $  is the Ricci curvature  in the direction of  $\dot{c}$. Set $\hat{R}^+(t)=\frac{1}{2}(\hat{R}+|\hat{R}|)$, then Corollary \ref{cr1.3}
implies that there is no conjugate point alone $[0,a]$ if $\Big(\int_0^T\(t-\frac{t^2}{T}\)\hat{R}^+dt\Big)<1 $.

Now, if we consider the system (\ref{4.0}) in the case $n=1, R=1$ with the boundary conditions  \bea y(0)=0,\quad \cos(\theta) y(T)+\sin( \theta) \dot{y}(T)=0, \,\ \theta\in[0,\pi/2]. \lb{mix} \eea It is well known that the $k$-th eigenvalue $\lambda_k$ is the $k$-th positive solution of the next transcendental  equation
\bea \tan (\sqrt{\lambda}T)=-\tan(\theta)\sqrt{\lambda}.  \lb{lamk}\nonumber\eea
It is easy to check that if $\theta=0$, then $\lambda_k={k^2\pi^2\over T^2}$, and if  $\theta=\pi/2$,
 then $\lambda_k=\frac{\pi^2}{T^2}(k-\frac{1}{2})^2$.
For $\theta\in(0,\pi/2)$, it is obvious that $(k-\frac{1}{2})\pi<\sqrt{\lambda_k}T<k\pi$.   However,  $\lambda_k$ can only be solved numerically. As an application of the trace formula, we have the following equality, which itself is interesting.
\bea \sum\limits_{k\in\mathbb{N}}{1\over \lambda_k}=\frac{3T^2\sin(\theta)+T^3\cos(\theta)}{6(\sin(\theta)+T\cos(\theta))}. \lb{e.e} \eea
Obviously, for $\theta=0$, (\ref{e.e}) gives the well known identity
$ \sum\limits_{k\in\mathbb{N}}\frac{1}{k^2}=\frac{\pi^2}{6}$,  and  for $\theta=\pi/2$, (\ref{e.e})
 gives the identity
$  \sum\limits_{k\in\mathbb{N}}\frac{1}{\pi^2(k-\frac{1}{2})^2}= \frac{1}{2} $.
 To the best of our knowledge, for $\theta\in(0,\pi/2)$, we don't know any such kind of formula before on the sum of $1\over \lambda_k$.   The detailed calculation will be listed in Section 4.   Moreover, it is worth to point out that  we can compute the value of $\sum{1\over \lambda^m_k}$ for any $m\in \mathbb{N}$ by the trace formula (\ref{0.0.0}).

The present paper is organized as follows. In section 2, we give the
proof of the Hill-type formula (\ref{t.1a}). Section 3 is devoted to
proving the trace formula. Finally, in Section 4,  we will give the proof of Corollary \ref{cr1.3} and identities (\ref{e.e}).

\section{Hill-type formula for Sturm-Liouville systems }\label{sec2}

In this section, we will give the proof of the Hill-type formula.
The following lemma coming from \cite[Lemma 3.6]{Si} plays a
important role.
\begin{lem}\label{lem2.0}
Let $f(z)$ be an entire function with zeros at $z_1,z_2,\cdots$
(counting multiplicity). Suppose $f$ satisfied
\\  i)  Exponential bounded condition: for any $\epsilon$,  there exist $C_\epsilon$ such that
\bea| f(z)| \leq C_\epsilon \exp(\epsilon |z|), \label{c1}
\nonumber\eea ii) Sum finite condition: $\sum_{n=0}^\infty
|z_n|^{-1}<\infty, \label{c2}  $ then \bea f(z)=
f(0)\prod_{i=1}^{\infty} (1-z_n^{-1}z). \nonumber\label{c3}\eea
\end{lem}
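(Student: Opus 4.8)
The statement is the special case of Hadamard's factorization theorem in which the growth sits just below order one, so the plan is to build the canonical product over the zeros, divide it out of $f$, and show that the surviving exponential factor degenerates to the constant $f(0)$.

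First I would note that condition (ii) forces every $z_n\neq 0$, so in particular $f(0)\neq 0$, and that the genus-zero product
\begin{equation}
P(z)=\prod_{n}\Bigl(1-\frac{z}{z_n}\Bigr)
\end{equation}
converges locally uniformly to an entire function whose zero set, counted with multiplicity, coincides with $\{z_n\}$. A direct estimate using $\log(1+x)\le x$ gives $\log|P(z)|\le |z|\sum_n|z_n|^{-1}$, so $P$ has order at most one; dominated convergence applied to $\frac{1}{|z|}\sum_n\log(1+|z|/|z_n|)$ upgrades this to $\log M(r,P)=o(r)$, i.e. $P$ is of minimal exponential type. Since $f$ and $P$ have exactly the same zeros, the quotient $g=f/P$ extends to a zero-free entire function, hence $g=e^{h}$ for some entire $h$.

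The heart of the argument is to prove $h$ is constant. Hypothesis (i) gives $\log|f(z)|\le \log C_\epsilon+\epsilon|z|$, and by Hadamard's theorem (order at most one together with the convergent product from (ii)) one already knows $h$ is affine, $h(z)=az+b$. To see $a=0$ I would use the minimal type of $P$ to produce a matching lower bound: for each $\epsilon>0$ there is a sequence of radii $r_k\to\infty$ on whose circles $\log|P(z)|\ge -\epsilon|z|$. Writing
\begin{equation}
\Re(az+b)=\log|f(z)|-\log|P(z)|\le \log C_\epsilon+2\epsilon|z|
\end{equation}
on these circles and choosing the argument of $z$ so that $\Re(az)=|a|\,|z|$ yields $|a|\le 2\epsilon$; letting $\epsilon\to0$ forces $a=0$, so $h\equiv b$. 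Finally, evaluating $g=e^{h}=e^{b}$ at $z=0$ gives $e^{b}=f(0)/P(0)=f(0)$, whence $f(z)=f(0)P(z)=f(0)\prod_n(1-z/z_n)$.

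The main obstacle is precisely the lower bound $\log|P(z)|\ge-\epsilon|z|$, since $P$ is small near each of its zeros and no pointwise estimate is available there. The standard remedy is Cartan's lemma, which allows one to excise a union of disks of small total radius outside of which $\log|P|$ is controlled from below by $-C\log M(3r,P)=-o(r)$; one then selects the radii $r_k$ to avoid the excised set. Everything else---convergence of the product, the factorization $g=e^{h}$, and the concluding evaluation at the origin---is routine once this estimate is in hand.
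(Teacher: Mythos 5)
The paper does not prove this lemma at all: it is quoted verbatim from B.~Simon's \emph{Trace Ideals and Their Applications} (Lemma 3.6 there), so there is no in-paper argument to compare against. Your proof is a correct, self-contained derivation along the standard lines (it is essentially the classical Hadamard/Lindel\"of argument for functions of minimal exponential type): the genus-zero product converges by (ii), the quotient is $e^{az+b}$ by Hadamard, and the only nontrivial point --- killing the linear term $a$ --- requires a lower bound $\log|P(z)|\ge -\epsilon|z|$ on a sequence of circles, which you correctly identify as the crux and correctly resolve via the Cartan minimum-modulus estimate (choosing $\eta$ small enough that the excised disks cannot block every radius in $[R/2,R]$). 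One cosmetic remark: the hypothesis as stated tacitly assumes $f\not\equiv 0$, and your observation that (ii) forces $0\notin\{z_n\}$, hence $f(0)\neq 0$, is exactly the right reading of the statement. The argument is complete modulo the two standard cited facts and matches what one finds in Simon's or Levin's treatment, so nothing further is needed.
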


 Since $R_1A^{-1}$ is a trace class operator, by \cite[Chapter 3, P33]{Si}, we have that $\det(I+\lambda R_1 A^{-1})$ is an entire function with
$  |\det(I+\lambda R_1A^{-1}) | \leq \exp(|\lambda|\cdot
\|R_1A^{-1}\|_1)$,  where $\|\cdot\|_1$ is the trace norm. Moreover,
it satisfied the exponential bounded condition. It is obvious that
$\lambda_n$ is a zero point of $\det(I+\lambda R_1 A^{-1})$ if and
only if $\lambda_n$ is an eigenvalue of the system
(\ref{sl1}-\ref{slb1}).
It follows that $\sum\limits_{n}{1\over |\lambda_n|}<\infty$, where the sum takes for $\lambda_j$ counting multiplicity.
From Lemma \ref{lem2.0}, we have that \begin{eqnarray}\det(I+\lambda
R_1 A^{-1})=\prod_{n} (1-\lambda_n^{-1} \lambda).\end{eqnarray}

To continue, it is easy to verify  that $y_0$ is a solution of
(\ref{sl1}-\ref{slb1}) with respect to the eigenvalue $\lambda_0$ if
and only if $z_0$ is a solution of $(\ref{h1})$ with respect to the
same eigenvalue. And it is equivalent to
$\gamma_{\lambda_0}(T)z_0(0)\in \Lambda_1$. We have the following
observation.

\begin{lem}\label{lemma2.1} Suppose that $A$ is nondegenerate, then
\bea  \dim\ker (R_1A^{-1}+1/\lambda_0)=\dim  \gamma_{\lambda_0}(T)\Lambda_0\cap\Lambda_1. \eea
\end{lem}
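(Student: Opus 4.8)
The plan is to realize both sides of the claimed identity as the dimension of one and the same solution space, carried back and forth by three successive linear isomorphisms. The starting point is that, because $A$ is assumed nondegenerate, the resolvent $A^{-1}:L^2([0,T],\mathbb R^n)\to D(\Lambda_0,\Lambda_1)$ is a bijection. I would first rewrite the eigenvalue equation: a vector $u\in L^2$ lies in $\ker(R_1A^{-1}+1/\lambda_0)$ precisely when $y:=A^{-1}u\in D(\Lambda_0,\Lambda_1)$ satisfies $R_1 y=-\tfrac{1}{\lambda_0}Ay$, that is $Ay+\lambda_0 R_1 y=0$. Since $u=Ay$ and $A^{-1}$ is injective, the map $u\mapsto y$ is a linear isomorphism from $\ker(R_1A^{-1}+1/\lambda_0)$ onto the eigenspace
$$E_{\lambda_0}:=\{\,y\in D(\Lambda_0,\Lambda_1):Ay+\lambda_0 R_1y=0\,\},$$
which is exactly the space of solutions of (\ref{sl1})--(\ref{slb1}) at $\lambda=\lambda_0$. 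Hence $\dim\ker(R_1A^{-1}+1/\lambda_0)=\dim E_{\lambda_0}$.

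Next I would pass to the Hamiltonian picture through the Legendre transformation recorded before the lemma. The map $y\mapsto z=(P\dot y+Qy,\,y)^T$ is a linear isomorphism between $E_{\lambda_0}$ and the space of solutions $z$ of the first-order system $\dot z=JB_{\lambda_0}(t)z$ satisfying $z(0)\in\Lambda_0$ and $z(T)\in\Lambda_1$; its inverse recovers $y$ as the lower block of $z$ and $\dot y=P^{-1}(x-Qy)$, using the invertibility of $P(t)$. The only thing to check here is that the two separated boundary conditions encoded in the domain $D(\Lambda_0,\Lambda_1)$ coincide with the endpoint conditions $z(0)\in\Lambda_0$, $z(T)\in\Lambda_1$, which is immediate from $z=(x,y)^T$ with $x=P\dot y+Qy$.

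Finally, every solution of $\dot z=JB_{\lambda_0}(t)z$ is determined by its initial value via $z(t)=\gamma_{\lambda_0}(t)z(0)$, so the Hamiltonian solution space is in bijection, through $z\mapsto z(0)$, with
$$\{\,v\in\Lambda_0:\gamma_{\lambda_0}(T)v\in\Lambda_1\,\}=\Lambda_0\cap\gamma_{\lambda_0}(T)^{-1}\Lambda_1.$$
Applying the invertible matrix $\gamma_{\lambda_0}(T)$ sends this subspace isomorphically onto $\gamma_{\lambda_0}(T)\Lambda_0\cap\Lambda_1$, hence preserves its dimension. Chaining the three isomorphisms yields $\dim\ker(R_1A^{-1}+1/\lambda_0)=\dim E_{\lambda_0}=\dim(\gamma_{\lambda_0}(T)\Lambda_0\cap\Lambda_1)$, as claimed.

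I expect no deep obstacle, since the argument is a chain of explicit linear isomorphisms. The one place that demands genuine care is the first step: verifying that $u\mapsto A^{-1}u$ restricts to a bijection from $\ker(R_1A^{-1}+1/\lambda_0)$ onto $E_{\lambda_0}$, so that no solution is lost or gained when trading the operator equation $R_1A^{-1}u=-\lambda_0^{-1}u$ on $L^2$ for the differential equation $Ay+\lambda_0 R_1y=0$ on the domain $D(\Lambda_0,\Lambda_1)$. This rests squarely on the standing nondegeneracy of $A$, which makes $A^{-1}$ everywhere defined and injective, so that $u=Ay$ is a well-defined two-sided inverse and the two kernels are carried exactly onto one another.
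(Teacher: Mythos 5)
Your proposal is correct and follows exactly the route the paper intends: the paper gives no separate proof of Lemma \ref{lemma2.1}, but derives it from the remark immediately preceding it (a solution $y_0$ of (\ref{sl1})--(\ref{slb1}) at $\lambda_0$ corresponds to a solution $z_0$ of (\ref{h1}), which is equivalent to $z_0(0)\in\Lambda_0$ with $\gamma_{\lambda_0}(T)z_0(0)\in\Lambda_1$), and your three explicit linear isomorphisms are precisely that chain written out in full. Nothing is missing; you have simply supplied the details the paper leaves to the reader.
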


For the Lagrangian frames $Z_i$ of $\Lambda_i$, $i=0,1$, set
\bea  g(\lambda)= \det(\ga_\lambda(T) Z_0,Z_1).  \label{g} \eea
  We have
\begin{lem}\label{lemma2.2}
$g(\lambda)$ is an analytic function and satisfied the  exponential bounded condition.
 \end{lem}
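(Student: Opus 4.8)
The plan is to prove the two assertions separately: first that $g$ is entire, then that it obeys $|g(\lambda)|\le C_\epsilon\exp(\epsilon|\lambda|)$ for every $\epsilon>0$.

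For analyticity I would use that, by (\ref{b2}), the coefficient $JB_\lambda(t)=JB_0(t)+\lambda\,\widetilde R(t)$ depends affinely on $\lambda$, where $\widetilde R(t)=J\,\mathrm{diag}(0,-R_1(t))$ is continuous and $\lambda$-independent. The fundamental solution is given by the Peano--Baker (Dyson) series $\gamma_\lambda(t)=I_{2n}+\sum_{k\ge1}\int_{0\le s_k\le\cdots\le s_1\le t}JB_\lambda(s_1)\cdots JB_\lambda(s_k)\,ds$; each term is a polynomial in $\lambda$ and the series converges uniformly for $(\lambda,t)$ in compact sets, so $\lambda\mapsto\gamma_\lambda(T)$ is entire with values in the $2n\times2n$ matrices. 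Since $g(\lambda)=\det(\gamma_\lambda(T)Z_0,Z_1)$ is a fixed polynomial in the entries of $\gamma_\lambda(T)Z_0$ (the columns $Z_1$ being constant), $g$ is entire.

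The exponential bound is the \emph{main obstacle}, because the naive Gronwall estimate $\|\gamma_\lambda(T)\|\le\exp\int_0^T\|JB_\lambda\|\,ds$ yields only $|g(\lambda)|\le C e^{C|\lambda|}$, i.e. finite type, which is too weak for the ``any $\epsilon$'' requirement of Lemma \ref{lem2.0}. The remedy is to exploit that $\lambda$ enters (\ref{h1}) only through the off-diagonal coupling: in coordinates $z=(x,y)^T$ the system reads $\dot x=(\cdots)x+(\cdots-\lambda R_1)y$ and $\dot y=(\cdots)x+(\cdots)y$, with no $\lambda$ in the $\dot y$ block. Setting $x=\sqrt\lambda\,u$ and $w=(u,y)^T$ rebalances the two blocks: a direct computation of $D_\lambda^{-1}JB_\lambda D_\lambda$ with $D_\lambda=\mathrm{diag}(\sqrt\lambda\,I_n,I_n)$ turns the system into $\dot w=\big(\sqrt\lambda\,\mathcal N(t)+\mathcal B_0(t)+\lambda^{-1/2}\mathcal B_1(t)\big)w$, where $\mathcal N,\mathcal B_0,\mathcal B_1$ are continuous and $\lambda$-independent. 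Hence for $|\lambda|\ge1$ Gronwall gives, for the fundamental solution $\widetilde\gamma_\lambda$ of the rescaled system, $\|\widetilde\gamma_\lambda(T)\|\le\exp\int_0^T(\sqrt{|\lambda|}\,\|\mathcal N\|+\|\mathcal B_0\|+\|\mathcal B_1\|)\,ds\le C e^{a\sqrt{|\lambda|}}$.

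Undoing the change of variables, $\gamma_\lambda(T)=D_\lambda\widetilde\gamma_\lambda(T)D_\lambda^{-1}$, so for $|\lambda|\ge1$ one gets $\|\gamma_\lambda(T)\|\le|\lambda|^{1/2}C e^{a\sqrt{|\lambda|}}\le C' e^{a'\sqrt{|\lambda|}}$; together with the boundedness of $\gamma_\lambda(T)$ on the compact disc $|\lambda|\le1$ this holds for all $\lambda$. Consequently $|g(\lambda)|\le C\|\gamma_\lambda(T)\|^n\le C_4 e^{b\sqrt{|\lambda|}}$, so $g$ has order $1/2$. Finally, using $\sqrt{|\lambda|}\le\epsilon|\lambda|+(4\epsilon)^{-1}$, each such bound is absorbed into $|g(\lambda)|\le C_\epsilon\exp(\epsilon|\lambda|)$ for every $\epsilon>0$, which is exactly the exponential bounded condition. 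The only points needing care are the harmless choice of a branch of $\sqrt\lambda$ (the final estimate bounds norms of the genuinely defined $\gamma_\lambda$, hence is branch-independent) and restricting the rescaling argument to $|\lambda|\ge1$ so that the $\lambda^{-1/2}\mathcal B_1$ term stays bounded.
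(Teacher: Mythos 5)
Your proof is correct and follows essentially the same route as the paper: the key step in both is a diagonal rescaling of the two blocks (your $D_\lambda=\mathrm{diag}(\sqrt{\lambda}I_n,I_n)$ is, up to an irrelevant scalar factor, the paper's $a(\mu)=\mathrm{diag}(\lambda^{1/4}I_n,\lambda^{-1/4}I_n)$) so that Gronwall yields $\|\gamma_\lambda(T)\|\le C|\lambda|^{1/2}e^{C|\lambda|^{1/2}}$, i.e.\ order $1/2$, which gives the $\epsilon$-exponential bound. The only cosmetic difference is that the paper runs the Gronwall estimate through an auxiliary fundamental solution $\bar\gamma_\mu$ of the bounded part, whereas you apply Gronwall directly to the full rescaled system; both work.
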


\begin{proof} The analyticity of $\gamma_\lambda$ comes from Krein \cite{K2} essentially. For the Taylor expansion, readers are referred to \cite[Section 2.2]{HOW}.
 Next, we will show that $g(\lambda)$ satisfies the exponential bounded condition.  For nonzero $\lambda$, let $\mu=\lambda^{1/4}$ and $a(\mu)=diag(\mu I_n, \mu^{-1} I_n)$, we set
 $\hat{\ga}_\lambda(t)=a(\mu)^{-1}\ga_\lambda(t)$,
direct computation shows that
           $$\frac{d}{dt}(\hat{\ga}_\lambda(T))=Ja(\mu)B_\lambda(t)a(\mu)\hat{\ga}(t).$$  Moreover,  $$ a(\mu)B_\lambda(t)a(\mu)= \bar{B}_{\mu}(t)+\mu^2\hat{B}(t) $$ with
$$  \bar{B}_{\mu}=\left(\begin{array}{cc}0_n& -P^{-1}Q \\
-Q^TP^{-1}  & \mu^{-2}(Q^TP^{-1}Q-R)
\end{array}\right),  \,\  \hat{B}=\left(\begin{array}{cc}P^{-1}&  0_n\\
0_n  & -R_1
\end{array}\right).$$
Let $\bar{\ga}_\mu$ be the fundamental solution with respect to
$\bar{B}_{\mu}$,   then
$$\frac{d}{dt}(\bar{\ga}_\mu^{-1}\hat{\ga}_\lambda(t))=\mu^2J\bar{\ga}_\mu^T\hat{B}(t)\bar{\ga}_\mu\cdot\bar{\ga}_\mu^{-1}\hat{\ga}_\lambda(t)).$$
Restricting  on the region  $|\mu|\geqq1$,  it is obvious that
$\bar{\ga}_\mu$ is bounded, and thus
$\bar{\ga}_\mu^T\hat{B}(t)\bar{\ga}_\mu$ is bounded.  Hence
$\|\bar{\ga}_\mu^{-1}\hat{\ga}_\lambda(T))  \| \leqq \exp(C|\mu|^2)$ for
some constant $C$. Consequently
$$\|\gamma_\lambda(T)\|\leqq C_0|\lambda|^{1/2} \exp(C|\lambda|^{1/2}).    $$  Finally, notice that $g(\lambda)$ is the finite combination of the finite product of the branches in the matrix, we have the results.

\end{proof}

By Lemma \ref{lemma2.1}, $g(\lambda_0)=0$ if and only if $\lambda_0$
is eigenvalue of (\ref{sl1}-\ref{slb1}).  That is, $g(\lambda)$ has
the same zero points as $\det(I+\lambda R_1A^{-1})$. Moreover, we
have the following lemma.
\begin{lem} \label{lemma2.3}
Suppose  $R_1>0$ and $\lambda_0$ is a zero point of $g(\lambda)$,   then the multiplicity of $g(\lambda)$ at $\lambda_0$ is same as the multiplicity of  $\det (I+\lambda R_1A^{-1})$  at  $\lambda_0$.
 \end{lem}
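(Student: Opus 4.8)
The plan is to prove Lemma~\ref{lemma2.3} by comparing the orders of vanishing of the two entire functions
$g(\lambda)=\det(\ga_\lambda(T)Z_0,Z_1)$ and $h(\lambda):=\det(I+\lambda R_1A^{-1})$
at a common zero $\lambda_0$. We already know from Lemmas~\ref{lemma2.1} and~\ref{lemma2.2} that both are analytic with exactly the same zero set, so the only issue is the \emph{multiplicity}. The key structural fact I would exploit is that the geometric multiplicity of each zero is already pinned down: by Lemma~\ref{lemma2.1},
$\dim\ker(R_1A^{-1}+1/\lambda_0)=\dim(\ga_{\lambda_0}(T)\Lambda_0\cap\Lambda_1)$,
and the latter is precisely the dimension of the kernel of the matrix $(\ga_{\lambda_0}(T)Z_0,Z_1)$. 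Thus the two functions share the same geometric multiplicity for free; what remains is to upgrade this to equality of algebraic multiplicities, and this is exactly where the hypothesis $R_1>0$ must enter.

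First I would reduce to a statement about algebraic versus geometric multiplicity by invoking the standard fact that for a trace-class perturbation the order of the zero of the Fredholm determinant $h(\lambda)=\det(I+\lambda R_1A^{-1})$ at $\lambda_0$ equals the algebraic multiplicity of $-1/\lambda_0$ as an eigenvalue of $R_1A^{-1}$ (equivalently the dimension of the corresponding generalized eigenspace / the size of the Jordan block structure). The positivity $R_1>0$ is what forces this operator to be similar to a self-adjoint one: writing $R_1=R_1^{1/2}R_1^{1/2}$, the operator $R_1A^{-1}$ is similar to the self-adjoint operator $R_1^{1/2}A^{-1}R_1^{1/2}$ via conjugation by $R_1^{1/2}$. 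Hence $R_1A^{-1}$ is diagonalizable with no nontrivial Jordan blocks, so its algebraic and geometric multiplicities coincide. Consequently the order of vanishing of $h$ at $\lambda_0$ equals $\dim\ker(R_1A^{-1}+1/\lambda_0)$.

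Next I would show the analogous collapse on the $g$-side: that the order of vanishing of $g(\lambda)$ at $\lambda_0$ also equals the geometric multiplicity $\dim(\ga_{\lambda_0}(T)\Lambda_0\cap\Lambda_1)$. The natural tool here is a monotonicity/Sturm-type argument. Since $R_1>0$, the quadratic form associated with $B_\lambda$ is strictly monotone in $\lambda$, so the crossing of the Lagrangian path $\lambda\mapsto\ga_\lambda(T)\Lambda_0$ through $\Lambda_1$ is \emph{regular}: the crossing form is nondegenerate (definite) on the intersection. A regular crossing contributes to the vanishing of $g$ an order equal to the dimension of the intersection, i.e.\ each eigenvalue is simple in the sense that $g$ has a zero of order exactly equal to its geometric multiplicity. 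Concretely I would differentiate $g$ using the first-variation formula for $\ga_\lambda$ — $\partial_\lambda\ga_\lambda(T)=\ga_\lambda(T)\int_0^T\ga_\lambda(s)^{-1}J\partial_\lambda B_\lambda(s)\ga_\lambda(s)\,ds$ — and evaluate the derivative on a basis of the kernel, showing it is nonzero precisely because $\partial_\lambda B_\lambda=-\,\mathrm{diag}(0,R_1)$ gives a definite contribution when $R_1>0$.

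The main obstacle I anticipate is the $g$-side order computation when the intersection has dimension greater than one: I must show not merely that the \emph{first} derivative of $g$ is nonzero (which only handles simple zeros), but that the order of vanishing equals the full dimension $d=\dim(\ga_{\lambda_0}(T)\Lambda_0\cap\Lambda_1)$, no more and no less. The clean way to control this is to factor $g$ near $\lambda_0$ through a $d\times d$ reduced determinant (a Lyapunov--Schmidt or Schur-complement reduction of the matrix $(\ga_\lambda(T)Z_0,Z_1)$ onto the kernel), and then identify the leading $d\times d$ block with the crossing form, whose determinant is nonzero by the strict positivity of $R_1$. This localizes the whole argument to a finite-dimensional definiteness statement and matches the order of vanishing to $d$ on both sides, completing the proof. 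I would close by remarking that the restriction $R_1>0$ is used only to rule out Jordan blocks / degenerate crossings, and that the general (non-positive) case is handled by the perturbation argument that follows.
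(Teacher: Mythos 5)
Your proposal is correct in outline, and its first half coincides with the paper's argument: both use $R_1>0$ to identify $R_1A^{-1}$ with (a conjugate of) the self-adjoint operator $R_1^{1/2}A^{-1}R_1^{1/2}$, so that the order of vanishing of $\det(I+\lambda R_1A^{-1})$ at $\lambda_0$ equals the geometric multiplicity, which by Lemma \ref{lemma2.1} equals $d=\dim\big(\ga_{\lambda_0}(T)\Lambda_0\cap\Lambda_1\big)$ and hence is at most the order $m_1$ of the zero of $g$. Where you genuinely diverge is the reverse inequality. The paper disposes of it softly: by a small perturbation (citing \cite[Section 4]{HW}) one may split the zero of $g$ of order $m_1$ into $m_1$ simple zeros nearby, each of which is an eigenvalue, whence $m_1\le m_2$ by counting. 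You instead compute the order of vanishing of $g$ exactly, via a Lyapunov--Schmidt reduction of $(\ga_\lambda(T)Z_0,Z_1)$ onto its $d$-dimensional kernel and the observation that the crossing form, $\int_0^T y^TR_1y\,dt$ on the intersection, is positive definite when $R_1>0$, so the reduced $d\times d$ determinant vanishes to order exactly $d$. This is a legitimate and in fact more self-contained route: it avoids the appeal to an external perturbation argument and gives the sharper statement $m_1=d$ directly, at the cost of carrying out the finite-dimensional reduction. One small point you should not omit when writing it up: to see that the crossing form is nondegenerate you must check that a nonzero element $v_0$ of the intersection cannot have identically vanishing $y$-component along the flow; this follows from $\dot y=P^{-1}x-P^{-1}Qy$, since $y\equiv 0$ would force $x\equiv 0$ and hence $v_0=0$. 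With that detail supplied, your argument is complete.
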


\begin{proof}
Suppose the multiplicity of $g(\lambda)$ and  $\det (I+\lambda R_1A^{-1})$  at $\lambda_0$ is $m_1$ and $m_2$ respectively.   Since   $R_1>0$, the eigenvalue of $R_1A^{-1}$ is simple and  then $m_2=dim\ker (I+\lambda R_1A^{-1}) $. By Lemma  \ref{lemma2.1}, we have $m_2\leqq m_1$.
On the other hand, by the techniques of small perturbation (details could be found in \cite[Section 4]{HW}),  we can assume $g(\lambda)$ has $m_1$ simple zeros near $\lambda_0$, and thus $m_1\leqq m_2$, which implies the result.
\end{proof}

From the above lemmas,  we will give the proof of Theorem 1.1.
\vskip2mm\noindent\emph{Proof of Theorem \ref{thm1.1}.} We firstly
prove the Hill-type formula for  the case $R_1>0$, by the nondegenerate  assumption,  $0$
is not a zero point of $g(\lambda)$.   Please note that  both  $\det
(I+\lambda R_1A^{-1})$  and $g(\lambda)$ satisfy the exponential
bounded conditions and by Lemma \ref{lemma2.3}, they have the same
zero points with same multiplicities. Next by Lemma \ref{lem2.0}, we
have \bea  \det (I+\lambda R_1A^{-1})= g(0)^{-1}g(\lambda).
\label{hillp}  \eea In the general case,  choose $\alpha_0 \in
\mathbb R $ such that $R_1-\alpha_0I_n>0 $ and $A+\alpha_0 I_n$ is
nondegenerate,  then \bea   \det (I+ \lambda R_1A^{-1})=  \det [I+
\lambda(R_1-\alpha_0 I_n)(A+\alpha_0I_n)^{-1}]\cdot  \det (I+
\alpha_0 A^{-1})   \label{hillp1}\eea By using (\ref{hillp}) on the
two factors of the right hand side of (\ref{hillp1}), we have
(\ref{hillp}) for the general $R_1$.  By taking $\lambda=1$ we get the desired
result (\ref{t.1a}).
 \hfill$\Box$

\section{Trace  formula for Sturm-Liouville systems }\label{sec3}

In this section, we will prove Theorem \ref{thm1.2}. To do this, we will consider the expansion
of the Hill-type formula (\ref{hillp}). Notice that $A^{-1}$ is a
trace class operator, by \cite[P47, (5.12)]{Si},
\begin{eqnarray} \det(I+\lambda R_1A^{-1})=\exp\Big(\sum\limits_{m=1}^\infty\frac{(-1)^{m+1}}{m}\lambda^m Tr((R_1A^{-1})^m)\Big).  \label{cc4.20}  \end{eqnarray}
Next, we will give the expansion on $g(\lambda)$. Let
$V_0=\Lambda_0\cap\Lambda_1 $, assume  $\dim V_0=k_0$, then $0\leq
k_0\leq n$. Suppose that $\{d_1,\cdots,d_{k_0}\}$ is an orthonormal
basis of $V_0$, and $\{d_1,\cdots,d_{k_0},d_{k_0+1},\cdots, d_n\}$
is an orthonormal basis of $\Lambda_0$. Notice that $\mathbb
R^{2n}=\Lambda_0\oplus J\Lambda_0$. Therefore, setting $d_{n+j}=J
d_j$, we have $\{d_1,\cdots, d_{2n}\}$ is a basis of $\mathbb
R^{2n}$ and the matrix $M_1=\left(d_1,d_2,\cdots,\,d_{2n}\right)$ is
a symplectic orthogonal matrix. Next, set $V_1=\Lambda_1\ominus V_0$, then it is a Lagrangian subspace of $\mathbb R^{2n}\ominus(V_0\oplus JV_0)$. Take an
orthonormal basis $\{f_{k_0+1},\cdots, f_n\}$ of $V_1$, then
$\{d_1,\cdots, d_{k_0},f_{k_0+1},\cdots, f_n\}$ is an orthonormal
basis of $\Lambda_1$.

Let $\{e_k;k=1,\cdots, 2n\}$ be the standard basis of $(\mathbb R^{2n}, \omega_0)$. Obviously $e_{n+k}=Je_k$ and $M_1^Td_j=e_j$ for  $1\leqq j\leqq n$.
Notice that  $M_1^T(f_{k_0+1},\cdots,f_{n})$ gives a Lagrangian frame of $M_1^TV_1$.
By  direct computation, for
 $k_0+1\leqq l\leqq n$ and $1\leqq j\leqq k_0$, $$(M_1^Tf_l,e_j)=(M_1^Tf_l,e_{n+j}) =0.$$   Rewrite such a frame as  $\left(\begin{array}{cc}\tilde{X}_1
\\ \tilde{Y}_1 \end{array}\right)$, where $\tilde{X}_1$, $\tilde{Y}_1$  are  $(n-k_0)\times(n-k_0)$  matrices and $\tilde{Y}_1$ is nonsingular.
Let $M_2=\left(\begin{array}{cc} I_{n-k_0}&  -\tilde{X}_1\tilde{Y}_1^{-1}\\
0_{n-k_0}  & I_{n-k_0}
\end{array}\right)$, and \begin{eqnarray}M_3=(I_{2k_0}\diamond M_2)\cdot M_1,\label{eq3.3}\end{eqnarray} where $I_{2k_0}\diamond M_2=\left(\begin{array}{cccc} I_{k_0}& 0& 0 & 0\\ 0& I_{n-k_0} & 0 &-\tilde{X}_1\tilde{Y}_1^{-1} \\ 0 & 0& I_{k_0} & 0 \\ 0&0&0& I_{n-k_0}\end{array}\right)$. Obviously, $M_3$ is a symplectic orthogonal matrix.

Let $\bar{V}_0=span\{e_1,\cdots,e_n\}$,  $\bar{V}_1=span\{e_{k_0+1},\cdots,e_{k_0+n}\}$ which are Lagrangian subspaces. Let $P_0$, $P_1$ be   the orthogonal  projections onto $\bar{V}_0$ and  $\bar{V}_1$ respectively.  For any matrix  $M$  on $\mathbb R^{2n}$, we always set
$$\cal{P}(M):=P_1M_3MM_3^{-1}P_0, $$ which is a $n\times n$ matrix with $ \cal{P}(M)_{i,j}=(M_3MM_3^{-1} e_j, e_{i+k_0})$.
In the case $\dim V_0=0$ or $n$,  the expression of $\cal{P}(M)$ is simple. In fact,
rewrite
$M_3MM_3^{-1}=\left(\begin{array}{cc} \hat{M}_1& \hat{M}_2\\
\hat{M}_3  & \hat{M}_4 \end{array}\right) $, then  $\cal{P}(M)=\hat{M}_3$ in the case $\Lambda_0=\Lambda_1$,  and $\cal{P}(M)=\hat{M}_1$ in the  transversal case $\Lambda_0\cap \Lambda_1=\{0\}$.

Notice that $\det (M_3)=1$,  $$g(\lambda)=\det(M_3)\det(\ga_\lambda(T)Z_0,Z_1)= \det(M_3\ga_\lambda(T) Z_0, M_3Z_1).$$ Direct computation shows that
$$\det(M_3\ga_\lambda(T) Z_0, M_3Z_1)=\det(M_3\ga_\lambda(T)M_3^{-1}M_3Z_0,M_3Z_1)=(-1)^{nk_0}\det(\cal{P}(\ga_\lambda(T)))\det(\tilde{Y}_1). $$
Then \bea g(\lambda)g(0)^{-1}=\det(\cal{P}(\ga_\lambda(T))\cdot \det(\cal{P}(\ga_0(T))^{-1}).  \eea

Set $D=diag(0_n, -R_1 )$, then $B_\lambda=B_0+\lambda D$, from \cite{HOW}, let  \begin{eqnarray*}\hat{D}(t)=\gamma_{0}^T(t)D(t) \gamma_{0}(t),\end{eqnarray*} and
\begin{eqnarray*}
F_k=\int_0^TJ\hat{D}(t_1)\int_0^{t_1}J\hat{D}(t_2)\cdots\int_0^{t_{k-1}}J\hat{D}(t_k)dt_k\cdots dt_2dt_1,
k\in\mathbb N\label{adc4.13}.  \end{eqnarray*}
 By  Taylor's formula,
\begin{eqnarray*}
\gamma_{\lambda}(T)=\gamma_0(T)(I_{2n}+\lambda F_1+\cdots+\lambda^kF_k
+\cdots), \label{c4.11}
\end{eqnarray*}
then
\begin{eqnarray*}
\cal{P}(\gamma_{\lambda}(T))=\cal{P}(\gamma_{0}(T))+\lambda \cal{P}(\gamma_{0}(T) F_1)+\cdots+\lambda^k \cal{P}(\gamma_{0}(T)F_k)
+\cdots), \label{c4.11}
\end{eqnarray*}
where $\cal{P}(\gamma_{0}(T))$ is nonsingular.  Set \bea
G_k=\cal{P}(\gamma_{0}(T) F_k)\cdot \cal{P}(\gamma_{0}(T))^{-1}, \,\
for \,\ k\in \mathbb N,\lb{G}\eea and let $
f(\lambda)=\det(I_n+\lambda G_1+\cdots)$,  which is an analytic
function of $\lambda$.  It is obvious that $
g(\lambda)g(0)^{-1}=f(\lambda)$.

Since $f(\lambda)$ vanishes nowhere near $0$, we can write  $f(\lambda)=e^{g(\lambda)}$, then by  \cite[Formula (2.6)]{HOW} and some direct computation,
\begin{eqnarray}  g^{(m)}(0)/m != \sum_{k=1}^m \frac{(-1)^{k+1}}{k}\Big(\sum_{j_1+\cdots+j_k=m}Tr(G_{j_1}\cdots
G_{j_k})\Big). \label{cc4.22}  \end{eqnarray} Compare the
coefficients in  (\ref{cc4.20}) with  (\ref{cc4.22}), we have \bea
Tr((R_1A^{-1})^m)= m\sum_{k=1}^m
\frac{(-1)^{k+m}}{k}\Big(\sum_{j_1+\cdots+j_k=m}Tr(G_{j_1}\cdots
G_{j_k})\Big).   \eea This proves Theorem \ref{thm1.2}  because $Tr((R_1A^{-1})^m)=\sum\limits_{j}{(-1)^m\over \lambda_j^m}    $ .

Moreover, for the first two terms, we can write it more precisely.
\begin{eqnarray}\label{eq3.49ab}
\sum\limits_{j}{1\over \lambda_j}=-Tr(G_1)=-Tr\Big(\cal{P}\(\ga_0(T)\cdot
J\int_0^T\gamma_{0}^T(t)D(t) \gamma_{0}(t)dt\)\cdot
\cal{P}(\ga_0(T))^{-1}\Big), \lb{t.1}
\end{eqnarray}
and
\begin{eqnarray}
\sum\limits_{j}{1\over \lambda_j^2}&=&Tr(G_1^2)-2Tr(G_2) \nonumber
\\&=&
-2Tr \Big(\cal{P}\(\ga_0(T)\cdot J\int_0^T\gamma_{0}^T(t)D(t) \gamma_{0}(t)J\int_0^s\gamma_{0}^T(s)D(s) \gamma_{0}(s)ds dt\) \cdot \cal{P}(\ga_0(T))^{-1} \Big)\nonumber\\
                                &&\ \ \ +Tr\Big(\Big[\cal{P}\(\ga_0(T)\cdot J\int_0^T\gamma_{0}^T(t)D(t)
\gamma_{0}(t)dt\)\cdot \cal{P}(\ga_0(T))^{-1}\Big]^2\Big). \label{eq2.14}
\end{eqnarray}

\section{Examples}
In this section, we will give some detailed calculation on the trace formula for some special separated boundary value problems for Sturm-Liouville system. At first, we will consider the Dirichlet problem for the system (\ref{4.0}).   Obviously, in this case   $A=-{d^2\over dt^2}$, $R_1=-R$.
  Let  $K_n=\left(\begin{array}{cc} I_n & 0_n \\ 0_n & 0_n\end{array}\right)$, $D=\left(\begin{array}{cc} 0_n & 0_n \\ 0_n & R\end{array}\right)$.
Recall that
  $\gamma_{0}(t)$ satisfied
 $\dot{\gamma}_0(t)=JK_n\gamma_{0}(t)$ with $\gamma_{0}(0)=I_{2n}$. Direct computation shows that
 $ \gamma_0(t)=\left(\begin{array}{cc}I_n & 0_n\\ tI_n&I_n \end{array}\right).$ It is easy to verify  $\gamma_{0}(t)^{-1}=\left(\begin{array}{cc}I_n & 0_n\\ -tI_n&I_n \end{array}\right)$. Therefore,
 $$ J\hat{D}(t)= \gamma_{0}^{-1}(t)JD(t)\gamma_{0}(t)=\left(\begin{array}{cc} -tR(t) & -R(t)\\ t^2R(t)&tR(t) \end{array}\right).    $$
Then
$$  J\int_0^T \hat{D}dt= \left(\begin{array}{cc}  -\int_0^TtRdt &  -\int_0^TRdt\\  \int_0^Tt^2Rdt&  \int_0^TtRdt \end{array}\right),$$
and \bea \ga_0(T)\cdot J\int_0^T \hat{D}dt= \left(\begin{array}{cc}  -\int_0^TtRdt &  -\int_0^TRdt\\  \int_0^Tt^2Rdt-T\int_0^TtRdt&  \int_0^TtRdt-T\int_0^TRdt \end{array}\right). \lb{4.1}   \nonumber \eea
Obviously,  $\cal{P}(\ga_0(T))=TI_n$, and  \bea \cal{P}\( \ga_0(T)J\int_0^T \hat{D}dt \)=\int_0^Tt^2Rdt-T\int_0^TtRdt,  \nonumber\eea
thus \bea G_1=\frac{1}{T}\int_0^Tt^2Rdt-\int_0^TtRdt.  \nonumber\eea
We have \bea Tr(RA^{-1})=\sum_j\frac{1}{\lambda_j}=Tr\Big(\int_0^T\(t-\frac{t^2}{T}\)Rdt\Big). \nonumber\eea

 Recall that $ R^+=\frac{1}{2}(R+|R|)$ is nonnegative matrices. let $\lambda^+_j$ be the $j$-th eigenvalue of $\ddot{y}+\lambda R^+y=0$ under the Dirichlet boundary conditions,   then $\lambda_j>0$ for $j\in\mathbb{N}$.  Similar to the discussion of \cite[Theorem 4.12]{HOW},
 $\sum_j\frac{1}{\lambda_j}=Tr\Big(\int_0^T\(t-\frac{t^2}{T}\)R^+dt\Big)<1$ implies $\lambda_1>1$, hence
$\sd+R^+$ is nondegenerate for $\lambda\in[0,1]$.  Since $R^+\geq R$,  we have    $\sd+R$ is
nondegenerate. This proves Corollary \ref{cr1.3}.

At the end of this paper, we will consider  (\ref{4.0}) with the boundary condition (\ref{mix}).
We choose $d_1=\left(\begin{array}{cc}1
\\ 0 \end{array}\right)$ and  $f_2=\left(\begin{array}{cc}\cos(\theta)
\\ -\sin(\theta) \end{array}\right)$ to be the frame of $\Lambda_0$ and $\Lambda_1$ separately.  Then $M_1=I_2$,  $M_2=\left(\begin{array}{cc} 1 & \cot(\theta) \\ 0 & 1\end{array}\right)$, and consequently $M_3=M_2$.
It is not hard to see,
$ M_3^{-1}=\left(\begin{array}{cc} 1 & -\cot(\theta) \\ 0 & 1\end{array}\right)   $. Rewrite  $M=\left(\begin{array}{cc} a & b \\ c & d\end{array}\right)$ in short. Direct computation shows that in this case
\bea \cal{P}(M)=a+c
\cot(\theta).  \lb{pm}\eea
So we have $\cal{P}(\ga_0(T))=1+T\cot(\theta)$, and easy computations show that
\bea \cal{P}\( \ga_0(T)J\int_0^T \hat{D}dt \)=-\frac{T^2}{2}-\frac{T^3}{6} \cot(\theta).  \eea
We get \bea G_1=-\frac{3T^2+T^3\cot(\theta)}{6(1+T\cot(\theta))}=-\frac{3T^2\sin(\theta)+T^3\cos(\theta)}{6(\sin(\theta)+T\cos(\theta))}. \eea
By (\ref{t.1}), we get (\ref{e.e}). It should be pointed out that maybe the identity (\ref{e.e}) could  obtained by some other method. However, by the trace formula, we can get many other interesting identities directly  if we consider  different boundary conditions for the Sturm-Liouville system. 

\medskip

\noindent {\bf Acknowledgements.} The  authors thank Y. Long sincerely for his encouragements and interests.

\end{document}